\newcolumntype{C}{>{$}c<{$}} 
\definecolor{uququq}{rgb}{0.25,0.25,0.25}
\newtheorem{thm}{Theorem}[section]
\newtheorem{lem}[thm]{Lemma}
\newtheorem{prop}[thm]{Proposition}
\theoremstyle{definition}
\newtheorem{defn}[thm]{Definition}
\theoremstyle{definition}
\newtheorem{rem}[thm]{Remark}
\theoremstyle{definition}
\newtheorem{ex}[thm]{Example}
\newcommand{\Z}{\mathbb{Z}}
\def\H{\mathrm{H}}
\def\M{\mathrm{M}}
\begin{document}

\title{Shiftable Heffter Spaces}

\author{M. Buratti}
\address{SBAI - Sapienza Universit\`a di Roma, Via Antonio Scarpa 16, I-00161 Roma, Italy}
\email{marco.buratti@uniroma1.it}

\author{A. Pasotti}
\address{DICATAM - Sez. Matematica, Universit\`a degli Studi di Brescia, Via
Branze 43, I-25123 Brescia, Italy}
\email{anita.pasotti@unibs.it}

\keywords{Heffter system; Heffter array; resolvable configuration; net; pandiagonal magic square.}
\subjclass[2010]{}

\begin{abstract}
The shiftable Heffter arrays are naturally generalized to the shiftable Heffter spaces.
We present a recursive construction which starting from a single shiftable Heffter space leads to infinitely many other shiftable Heffter spaces of the same degree.
We also present a direct construction making use of pandiagonal magic squares leading to a shiftable $(16\ell^2,4\ell;3)$ Heffter space for any $\ell\geq1$.
Combining these constructions we obtain a shiftable $(16\ell^2mn,4\ell n;3)$ Heffter space for every triple of positive integers $(\ell,m,n)$ with $m\geq n$.
\end{abstract}

\maketitle

\section{Introduction}
Throughout this note, given two integers $a$ and $b$ we denote by $[a,b]$ the set of all integers $n$ such that $a\leq n\leq b$.
If $A$ is a set of integers, $A^+$ and $A^-$ will denote the sets of positive and negative integers of $A$, respectively.

A half-set of an abelian group $G$ is a subset $V$ of $G$ such that $V$ and $-V$ form a partition of $G\setminus\{0\}$.
Similarly, given a positive integer $v$, a half-set of $[-v,v]$ is a subset $V$ of $[-v,v]$ such that $V$ and $-V$ form a partition of $[-v,v]\setminus\{0\}$.
Clearly, a half-set of $[-v,v]$ can be also viewed as a half-set of $\Z_{2v+1}$.

We recall that a {\it partial linear space} is a point-block incidence structure $(V,\mathcal B)$ such that every pair of distinct points is contained in at most one block.
A partial linear space $(V,\mathcal B)$ is {\it resolvable} if ${\mathcal B}$ can be partitioned into {\it parallel classes} each of which is, in turn, a partition of $V$.
A $(v_r,b_k)$ {\it configuration} is a partial linear space with $v$ points each of degree $r$ and $b$ blocks each of size $k$. 
It is easy to see that the parameters of such a configuration satisfies the identity $vr=bk$.
For some results on resolvable configurations we refer to \cite{BS22,G}. 

Speaking of a $(v,k;r)$ space we will mean a resolvable $(v_r,b_k)$ configuration with $b=\frac{vr}{k}$.
A $(k^2,k;r)$ space is usually referred to as a $(r,k)$-{\it net}.
A $(v,k;r)$ space where every two distinct points are contained in {\it exactly} one block is a resolvable {\it Steiner $2$-design} S$(2,k,v)$.
In this case we necessarily have $r=\frac{v-1}{k-1}$, hence $b=\frac{v(v-1)}{k(k-1)}$.
For general background on nets and Steiner 2-designs we refer to \cite{BJL,CD}.

Let us say that two partitions $\mathcal P$ and $\mathcal Q$ of a set $V$ are {\it orthogonal} if any part of $\mathcal P$ 
has at most one element in common with every part of $\mathcal Q$. Using this terminology we can say that a $(v,k;r)$ space
is equivalent to a set ({\it resolution}) of $r$ mutually orthogonal partitions of a $v$-set into subsets of size $k$.
In particular, a $(v,k;1)$ space is nothing but a partition of a set of size $v$ into subsets of size $k$.

Although it is unusual, trivial and maybe even unaesthetic, it will be convenient to consider spaces with block size $k=1$. A $(v,1;r)$ space with point set $V$ is unique; its 
resolution is the partition of $V$ into singletons repeated $r$ times. This is the only case where repeated blocks are possible.

Inspired by the seminal paper by Caggegi, Falcone and Pavone \cite{CFP}, we say that a point-block incidence structure 
$(V,{\mathcal B})$ is {\it $G$-additive} if $V$ is a subset of an abelian group $G$
and each block is zero-sum in $G$. 
For an extensive literature on additive 2-designs and some generalizations see \cite{BMN,BN1,BN2,CFP2,FP,Pavone}.

We recently introduced  {\it Heffter spaces} \cite{BP1} as follows.
\begin{defn}
A $(v,k;r)$ {\it Heffter space} over an abelian group $G$ is a $G$-additive $(v,k;r)$ space whose points fill a half-set of 
$G$. 
\end{defn}

So a $(v,k;1)$ Heffter space is a partition of a half-set $V$ of an abelian group $G$ of order $2v+1$ into zero-sum $k$-subsets of $G$
and it is called a $(v,k)$ {\it Heffter system} on $V$.
The existence of a cyclic $(v,k)$ Heffter system for any admissible $v$ is implicitly deducible from the existence of a cyclic 
$k$-cycle decomposition of $K_{2kn+1}$ for any positive $n$ obtained, independently, in \cite{BGL} and \cite{BurDel}. 

The orthogonality of two $(v,k)$ Heffter systems 
${\mathcal P}=\{\mathcal P_1,\dots,\mathcal P_n\}$ and ${\mathcal Q}=\{\mathcal Q_1,\dots,\mathcal Q_n\}$ 
can be displayed by means of the $n\times n$ matrix $A$ whose entry $a_{i,j}$ is the intersection between $\mathcal P_i$ and $\mathcal Q_j$.
This matrix is said to be a {\it Heffter $(n;k)$ array} or, briefly, a $H(n;k)$. Heffter arrays were introduced by Archdeacon in \cite{A} and 
we refer to \cite{PD} for an extensive survey on them. Here, we just recall that an $\H(n; k)$ has been proved to exist if and only if $n \geq k \geq  3$
(see \cite{ADDY,CDDY,DW}).

A $(v,k;r)$ Heffter space is essentially a set of $r$ mutually orthogonal $(v,k)$ Heffter systems on the same half-set $V$.
Among the motivations for studying Heffter spaces we recall their connection with {\it orthogonal cycle systems}, an interesting topic
which received some attention recently \cite{BCP,KY}.
We point out that pairs of orthogonal cyclic cycle systems have been obtained in \cite{B,BDCY,CMPP,MT} as a consequence of constructive
results on Heffter arrays.
In \cite {BP1} and \cite{BP2} we have proved that for any pair of positive integers $(k,r)$ with $k\geq3$ there are infinitely many values of $v$ for which a $(v,k;r)$ Heffter space exists.
On the other hand, for the time being, the general existence problem for $(v,k;r)$ Heffter spaces does not appear to be easy.
In particular, considering that the above results have been obtained in suitable half-sets of elementary abelian groups,
we do not have $(v,k;r)$ Heffter spaces with $r>2$ where $2v+1$ is not a prime power.

We propose the following definition which is the natural generalization of the notion of an {\it integer Heffter array}.

\begin{defn}
An {\it integer} $(v,k;r)$ {\it Heffter space} is a $\Z$-additive $(v,k;r)$ space whose points fill a half-set of $[v,-v]$. 
It is said {\it shiftable} if $k$ is even and each block has exactly $\frac{k}{2}$ positive elements. 
\end{defn}

In the following, speaking of a {\it shiftable} Heffter space it will be tacitly understood that it is integer.

It is evident that every integer $(v,k;r)$ Heffter space can be viewed also as a $(v,k;r)$ Heffter space
on a half-set of $\Z_{2v+1}$.

The following two examples have been obtained by computer.

\begin{ex}\label{20}
Let $V$ be the half-set of $[-20,20]$ defined by 
$$V^+=\{2, 4, 6, 9, 10, 11, 12, 15, 17, 19\},\quad V^-=-\{1, 3, 5, 7, 8, 13, 14, 16, 18, 20\}$$
and consider the following partitions of $V$:

\medskip\noindent
\begin{center}
\begin{tabular}{|l|c|r|c|r|c|r|c|r|c|r|c|r|}
\hline {\quad\quad\quad$\mathcal {P}_1$}    \\
\hline $\{-1, 2, 17, -18\}$\\
\hline $\{-3, 6, 10, -13\}$\\
\hline $\{4, -5, -8, 9\}$ \\
\hline $\{-7, 12, 15, -20\}$  \\
\hline $\{ 11, -14, -16, 19\}$  \\
\hline
\end{tabular}\quad\quad\quad
\begin{tabular}{|l|c|r|c|r|c|r|c|r|c|r|c|r|}
\hline {\quad\quad\quad$\mathcal {P}_2$}    \\
\hline $\{-1,6,9, -14\}$\\
\hline $\{2, -5, -7, 10\}$  \\
\hline $\{-3, 4, 19, -20\}$\\
\hline $\{-8, 11, 15, -18\}$ \\
\hline $\{12, -13, -16, 17\}$  \\
\hline
\end{tabular}\quad\quad\quad
\begin{tabular}{|l|c|r|c|r|c|r|c|r|c|r|c|r|}
\hline {\quad\quad\quad$\mathcal {P}_3$}    \\
\hline $\{-1, 10, 11, -20\}$\\
\hline $\{2, -8, -13, 19\}$  \\
\hline $\{-3, 9, 12, -18\}$ \\
\hline $\{4, -7, -14, 17\}$\\
\hline $\{-5, 6, 15, -16\}$  \\
\hline
\end{tabular}
\end{center}

\medskip
It is easy to check that $\{\mathcal {P}_1,\mathcal {P}_2,\mathcal {P}_3\}$ is the resolution of a shiftable $(20,4;3)$ Heffter space. 
\end{ex}

\begin{ex}\label{24}
Let $V$ be the half-set of $[-24,24]$ defined by 
$$V^+=\{1,4,5,7,9,12,14,16,18,20,21,23\},\quad V^-=-\{2,3,6,8,10,11,13,15,17,19,22,24\}$$
and consider the following partitions of $V$:
\medskip\noindent
\begin{center}
\begin{tabular}{|l|c|r|c|r|c|r|c|r|c|r|c|r|}
\hline {\quad\quad\quad$\mathcal {P}_1$}    \\
\hline $\{1, -2, -13, 14\}$\\
\hline $\{-3, 4, 5, -6\}$  \\
\hline $\{7, -8, -15, 16\}$\\
\hline $\{9,-10, -11,12\}$  \\
\hline $\{-17, 18, 23, -24\}$\\
\hline $\{-19,20,21,-22\}$ \\
\hline
\end{tabular}\quad\quad\quad
\begin{tabular}{|l|c|r|c|r|c|r|c|r|c|r|c|r|}
\hline {\quad\quad\quad$\mathcal{P}_2$}    \\
\hline $\{1,-6,-15,20\}$  \\
\hline $\{-2, 7, 12,-17\}$\\
\hline $\{-3, 9, 16, -22\}$ \\
\hline $\{4, -8, -19, 23\}$\\
\hline $\{5,-10,-13,18\}$  \\
\hline $\{-11,14, 21,-24\}$\\
\hline
\end{tabular}\quad\quad\quad
\begin{tabular}{|l|c|r|c|r|c|r|c|r|c|r|c|r|}
\hline {\quad\quad\quad$\mathcal{P}_3$}    \\
\hline $\{1,-8,-11,18\}$\\
\hline $\{-2,5,16,-19\}$\\
\hline $\{-3,7,20,-24\}$ \\
\hline $\{4,-10,-15,21\}$  \\
\hline $\{-6,9,14,-17\}$\\
\hline $\{12,-13,-22,23\}$  \\
\hline
\end{tabular}
\end{center}

\medskip
It is easy to check that $\{\mathcal {P}_1,\mathcal {P}_2,\mathcal {P}_3\}$ is the resolution of a shiftable $(24,4;3)$ Heffter space. 
\end{ex}

The known results about integer Heffter arrays (see \cite{ADDY,DW}) allow us to state the following.

\begin{prop}
There exists an integer $(nk,k;2)$ Heffter space if and only if $nk\equiv 0$ or $3 \pmod 4$.
There exists a shiftable $(nk,k;2)$ Heffter space if and only if
$k$ is even and $nk\equiv 0 \pmod 4$.
\end{prop}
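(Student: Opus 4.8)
The plan is to translate this statement entirely into the language of integer Heffter arrays, for which the cited existence results do all the heavy lifting. Recall from the discussion preceding Definition 1 (in the paragraph on Heffter $H(n;k)$ arrays) that a $(v,k;2)$ Heffter space is precisely a pair of orthogonal $(v,k)$ Heffter systems on a common half-set, which is exactly the data encoded by a (possibly non-square) Heffter array. Writing $v=nk$, an integer $(nk,k;2)$ Heffter space is therefore equivalent to an integer Heffter array in which the entries fill a half-set of $[-nk,nk]$, i.e., an integer $H(n;k)$; and the shiftable case corresponds to a \emph{shiftable} integer Heffter array, where one asks additionally that $k$ be even and that every row and every column contain exactly $k/2$ positive entries. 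So the proposition is just a restatement of the known classification of integer and shiftable integer Heffter arrays from \cite{ADDY,DW}, and the proof should consist of (i) making this equivalence explicit, and (ii) quoting the two classification theorems with the congruence conditions matched up.

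For the first part I would spell out the dictionary carefully: given the resolution $\{\mathcal P,\mathcal Q\}$ of a $(v,k;2)$ Heffter space, index the $n$ parts of $\mathcal P$ by rows and the $n$ parts of $\mathcal Q$ by columns; orthogonality forces $|P_i\cap Q_j|\le 1$, and since each point of the half-set $V$ lies in exactly one part of each partition, placing the point $x\in P_i\cap Q_j$ in cell $(i,j)$ yields an $n\times n$ partially filled array with exactly $k$ entries per row and per column, entries forming a half-set of $[-nk,nk]$, and every row-sum and column-sum equal to zero — that is, an integer $H(n;k)$. Conversely any integer $H(n;k)$ gives back such a space by reading rows and columns as the two systems. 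Under this correspondence the additional ``shiftable'' requirement on the space (each block has exactly $k/2$ positive elements) becomes exactly the condition that each row and each column of the array has $k/2$ positive entries, which is the standard definition of a shiftable Heffter array; note $n\geq k\geq 3$ holds automatically since each of the $n$ parts has size $k$ and distinct parts of $\mathcal P$ are disjoint subsets of $V$ with $|V|=nk$.

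Then I would invoke the cited results. The existence of an integer $H(n;k)$ (equivalently an integer Heffter array of that shape) was settled in \cite{ADDY} for the square case and completed in \cite{DW}: such an array exists precisely when $nk\equiv 0$ or $3\pmod 4$ (together with $n\ge k\ge 3$, which as noted is forced here). For the shiftable case, \cite{ADDY,DW} show that a shiftable integer $H(n;k)$ exists if and only if $k$ is even and $nk\equiv 0\pmod 4$; the necessity of $nk\equiv 0\pmod 4$ comes from the elementary count that the sum of all entries of a shiftable array is zero while shiftability forces the positive part to be a specific set whose total imposes this divisibility, and the parity of $k$ is built into the definition of shiftable. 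Combining the equivalence with these two facts gives both halves of the proposition.

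The only real subtlety — and the step I would be most careful about — is checking that the notion of ``integer Heffter array'' as used in \cite{ADDY,DW} has entries filling a half-set of the \emph{correct} interval $[-nk,nk]$ and not, say, a half-set of $\Z_{2nk+1}$ or some other symmetric set, so that the translation to an integer Heffter space (Definition~3 of the excerpt) is literally an equivalence rather than merely an implication in one direction; this is a matter of matching conventions with the references, but it is where an error could creep in. Everything else is bookkeeping: verifying $vr=bk$ with $r=2$, $v=nk$, $b=2n$, and confirming that resolvability of the space is automatic from its description as two partitions. I do not expect any genuinely new argument to be needed.
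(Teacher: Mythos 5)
Your proposal is correct and takes essentially the same route as the paper: the paper gives no argument beyond citing \cite{ADDY,DW}, relying on exactly the dictionary you make explicit between degree-$2$ integer (resp.\ shiftable) Heffter spaces and integer (resp.\ shiftable) Heffter arrays $H(n;k)$, which is the correspondence already described in the introduction. One tiny quibble: $n\geq k$ follows from orthogonality (the $k$ points of a block of $\mathcal P$ lie in $k$ distinct blocks of $\mathcal Q$), not merely from the disjointness of the parts of $\mathcal P$, but this does not affect the conclusion.
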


In this paper we begin to investigate the existence of shiftable Heffter spaces of degree $r\geq3$.
In Section 2 we present a recursive construction which, starting from any shiftable Heffter space,
leads to infinitely many shiftable Heffter spaces of the same degree. In Section 3 we present a direct 
construction of a $(16\ell^2,4\ell;3)$ Heffter space for any $\ell$, with the crucial ingredient of {\it pandiagonal magic squares}.
Combining these constructions we can say that there exists a shiftable $(16\ell^2mn,4\ell n;3)$ Heffter space for any triple
of positive integers $(\ell,m,n)$ with $m\geq n$.
We note, in particular, that this result gives infinitely many values of $v$ for which $2v+1$ is not a prime power and there exists a $(v,k;3)$ Heffter space.
Indeed, for instance, we have a $(16\ell^2,4\ell;3)$ Heffter space for any $\ell$ and it is evident that $2(16\ell^2)+1$ is not a prime power for all $\ell\not\equiv0$ (mod 3).

\section{A recursive construction for shiftable Heffter spaces}\label{sec:3}
We prove that any single shiftable Heffter space allows to obtain, recursively, infinitely
many other Heffter spaces of the same degree.

\begin{thm}\label{recursive}
If there exists a shiftable $(v,k;r)$ Heffter space and a $(w,n;r)$ space,
then there exists a shiftable $(vw,kn;r)$ Heffter space.
\end{thm}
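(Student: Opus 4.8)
The plan is to build the product Heffter space by a tensor-like construction on point sets, taking the new half-set to live inside $[-vw,vw]$ via a carefully chosen embedding, and then to manufacture the $r$ orthogonal resolutions by pairing, for each $i\in[1,r]$, the $i$-th parallel class family of the shiftable $(v,k;r)$ space with the $i$-th parallel class family of the $(w,n;r)$ space. Concretely, let $V\subseteq[-v,v]$ be the half-set of the given shiftable Heffter space with resolutions $\mathcal R_1,\dots,\mathcal R_r$, and let $W$ (a $w$-set, which we may take to be $[1,w]$ or any convenient set) carry the $r$ orthogonal partitions $\mathcal S_1,\dots,\mathcal S_r$ into $n$-subsets of the $(w,n;r)$ space. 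The natural candidate for the new point set is $V\times W$, but we must realize it as a half-set of integers; the map $(a,b)\mapsto a + v'\cdot\sigma(b)$ for a suitable integer $v'$ and an injection $\sigma\colon W\to\Z$ chosen so that the images land in $[-vw,vw]$ and so that $(a,b)\mapsto -(a,b)$ corresponds to $a\mapsto -a$, i.e. we use $W$ together with a sign-free indexing and exploit that $V$ already splits symmetrically. Here shiftability of the first space is what makes the embedding work: because each block of each $\mathcal R_i$ has exactly $k/2$ positive and $k/2$ negative elements, translating $V$ by any constant preserves the zero-sum property of blocks after appropriate rescaling, which is exactly what lets us "spread out" $V$ into $w$ shifted copies indexed by $W$ without destroying additivity.

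The construction of blocks goes as follows. For a fixed $i$, a parallel class $P\in\mathcal R_i$ (a partition of $V$ into zero-sum $k$-sets) and a part $Q\in\mathcal S_i$ (an $n$-subset of $W$) combine into blocks by the rule: for each block $B\in P$, form the set $\{\,\phi(a,b) : a\in B,\ b\in Q\,\}$ where $\phi$ is the chosen embedding $V\times W\hookrightarrow[-vw,vw]$. Each such block has size $kn$. Summing over $i$, over $P\in\mathcal R_i$, and over $Q\in\mathcal S_i$, we get the block set of the $i$-th resolution. I would first check that each resulting set of size $kn$ is indeed zero-sum in $\Z$: if $\phi(a,b)=a+v'\,\sigma(b)$ then $\sum_{a\in B,\,b\in Q}\phi(a,b) = |Q|\sum_{a\in B}a + v'\,|B|\sum_{b\in Q}\sigma(b) = 0 + v'\,k\,(\text{something})$, so I need the $\sigma$-sum over each $Q$ to vanish, OR — better — I should instead use the shiftable structure more cleverly: replace $a$ by $a$ and let the $W$-coordinate only relabel which "layer" we are in, choosing $v'$ large enough ($v' = 2v+1$ works) that the layers are disjoint, and then not require zero-sum layer by layer but rather arrange $Q$ so that the layer-contributions cancel in pairs. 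This is the point where shiftability is essential: because $k$ is even and blocks are balanced, a block $B$ shifted up by $t$ and the "mirror" arrangement shifted down by $t$ together stay zero-sum, so I want to pair up the elements of $W$ (or of each $Q$) symmetrically. The cleanest route is probably: take the new half-set to be $\{\,a + (2v+1)\,j : a\in V,\ j\in[1,w]\,\}$... but this is not symmetric, so instead one takes $W$-labels from a half-set-like symmetric index set and uses that $V$ is balanced to absorb signs. I will carry out whichever variant makes the zero-sum bookkeeping cleanest, but the key identity is always $\sum \phi(a,b) = (\text{layer factor})\cdot\sum_{a\in B}a$ plus a correction that is forced to zero by evenness of $k$ together with the $\pm$ symmetry.

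Next I would verify the three structural requirements. First, \emph{half-set}: the $\phi$-images of $V\times W$ and of $-(V\times W)$ must partition the nonzero integers in the relevant range; since $V,-V$ partition $[-v,v]\setminus\{0\}$ and the layers indexed by $W$ are chosen symmetrically and disjointly, this follows by a direct counting and disjointness argument, giving exactly $vw$ positive points, with each block having $kn/2 = (k/2)(n)$ positive elements — wait, we need $kn/2$ positive elements per block for shiftability, and since each of the $n$ layers contributes a translate of a $k$-element block containing $k/2$ positives among the original $V$-coordinates, after the sign-symmetric layering each block does contain exactly $kn/2$ positive points; I would confirm this. Second, \emph{resolvability and block size}: for fixed $i$, as $P$ ranges over $\mathcal R_i$ and $Q$ over $\mathcal S_i$, every point $\phi(a,b)$ lies in exactly one block (namely the one coming from the unique $B\ni a$ in the unique $P\ni a$ — but $P$ is not unique, $\mathcal R_i$ is a single resolution with several parallel classes; rather, within the resolution $\mathcal R_i$, $a$ lies in exactly one parallel class... no: a resolution is the \emph{set} of all its parallel classes, each a partition of $V$, so $a$ lies in every parallel class, once each). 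Let me restate: fix $i$; the blocks of the new $i$-th resolution are indexed by pairs $(B,Q)$ with $B$ a block of some parallel class of $\mathcal R_i$ and $Q\in\mathcal S_i$; two blocks $B,B'$ in the same parallel class of $\mathcal R_i$ are disjoint, and any $Q,Q'\in\mathcal S_i$ are either equal or disjoint, so blocks coming from a fixed parallel class of $\mathcal R_i$ together with all of $\mathcal S_i$ tile $V\times W$ — this is one parallel class of the product, and there are (number of parallel classes of $\mathcal R_i$) of them, each a partition of $V\times W$. Third, \emph{partial linearity / orthogonality across resolutions}: two points $\phi(a,b), \phi(a',b')$ are in a common block of resolution $i$ iff $a,a'$ are in a common block of $\mathcal R_i$ and $b,b'$ are in a common part of $\mathcal S_i$; if the same pair were in a common block of resolution $i'\neq i$, then $a,a'$ would be in a common block of $\mathcal R_{i'}$ and (if $a\ne a'$) this contradicts that distinct parallel-class families $\mathcal R_i,\mathcal R_{i'}$ of the original Heffter space are orthogonal — unless $a=a'$, in which case $b,b'$ would be in a common part of both $\mathcal S_i$ and $\mathcal S_{i'}$, contradicting orthogonality of the $(w,n;r)$ space (for $b\ne b'$). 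The case $a=a'$ and $b=b'$ is the same point. So the $r$ new resolutions are mutually orthogonal.

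The main obstacle I anticipate is the embedding step — defining $\phi$ so that simultaneously (i) the images fill a half-set of $[-vw,vw]$ exactly (no collisions, correct sign balance, correct cardinality $vw$), and (ii) every product block is zero-sum in $\Z$, and (iii) every product block has exactly $kn/2$ positive elements. These pull in slightly different directions, and getting all three at once is where the shiftability hypothesis on the $(v,k;r)$ space does the real work: it is precisely the balance condition ($k/2$ positives per block, $k$ even) that survives arbitrary integer translations of $V$, so that the $w$ "layers" — which are translates of $V$ by multiples of $2v+1$ (symmetrized around $0$) — can be glued without breaking additivity. By contrast the $(w,n;r)$ space is used only combinatorially, as a bookkeeping device for which layers go into which block; it need not be additive, which is why the theorem only needs an ordinary $(w,n;r)$ space on the second factor. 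Once $\phi$ is pinned down, the verifications above are routine counting and the orthogonality argument is a short case analysis, so the write-up will spend most of its length on the explicit description of $\phi$ and the zero-sum computation.
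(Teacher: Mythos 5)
Your overall architecture (pair the $i$-th Heffter system of the shiftable space with the $i$-th parallel class of the $(w,n;r)$ space, index product blocks by pairs $(B,Q)$, and reduce orthogonality of the new systems to orthogonality of the old ones) matches the paper. But the heart of the theorem --- the embedding $\phi$ of $V\times W$ into $[-vw,vw]$ --- is never actually produced, and the substitutes you float would fail. With your linear candidate $\phi(a,b)=a+v'\sigma(b)$ you correctly compute that a product block sums to $v'k\sum_{b\in Q}\sigma(b)$ up to the vanishing $V$-part, and you then propose to force this to zero either by making the $\sigma$-images of each part $Q$ zero-sum or by ``pairing up the layers symmetrically.'' Neither is available: the $(w,n;r)$ space carries no additive structure whatsoever, and its parts can be completely arbitrary --- in the paper's own first application $\mathcal B$ is the trivial $(2,1;3)$ space with parts $\{0\}$ and $\{1\}$, where a singleton part can neither be zero-sum nor cancel in pairs. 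Deferring the choice (``whichever variant makes the bookkeeping cleanest'') leaves exactly the nontrivial step unproved.

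The missing idea is that the shift direction must depend on the \emph{sign} of the first coordinate: the paper defines $a\star b=a+bv$ if $a\geq0$ and $a\star b=a-bv$ if $a<0$. Then for a block $A$ with $|A^+|=|A^-|=\frac{k}{2}$ (this is where shiftability enters, not through any ``translation-invariance of balance'') and a part $B$ of the second space, the positive half of $A\star B$ sums to $n\alpha^++\frac{k}{2}\beta v$ and the negative half to $n\alpha^--\frac{k}{2}\beta v$, so the $\beta$-terms cancel regardless of what $\beta$ is, and the total is $n(\alpha^++\alpha^-)=0$. The same sign-dependent map gives the half-set property for free, since $U^+=\bigcup_i(V^++iv)$ and $U^-=\bigcup_i(V^--iv)$, and it gives the balance $\frac{kn}{2}$ positives per product block. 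It also makes your orthogonality case analysis rigorous: from $a+bv=a'+b'v$ with $a,a'\in[1,v]$ one recovers $a=a'$ and $b=b'$, which is the decoding step your argument tacitly assumes when you claim that a point of a product block determines the pair $(a,b)$. As written, your proposal identifies the right target and the right difficulties but does not contain the construction that resolves them, so it is not yet a proof. (A minor additional wobble: each $\mathcal R_i$ in a $(v,k;r)$ Heffter space is a single partition of $V$, not a family of parallel classes; your final description recovers the correct structure, but the intermediate discussion conflates the two.)
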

\begin{proof}
First consider the binary operation $\star$ on the set of integers $\Z$ defined by 
$$a\star b=\begin{cases}a+bv & {\rm if \ } a\geq0;\cr a-bv & {\rm if \ } a<0.\end{cases}$$
If $A$ and $B$ are sets of integers, then $A\star B$ will denote the multiset
$\{a\star b \ : \ (a,b)\in A\times B\}$.
If ${\mathcal A}$ and ${\mathcal B}$ are collections of sets of integers, then 
we set ${\mathcal A}\star{\mathcal B}=\{A\star B \ | \ A\in{\mathcal A}; B\in{\mathcal B}\}$.

\medskip
Case 1: $r=1$.

\medskip
Let $\mathcal A$ be a shiftable $(v,k;1)$ Heffter space and let $\mathcal B$ be a $(w,n;1)$ space. 
Thus $\mathcal A$ is a shiftable $(v,k)$ Heffter system on a set $V$
and ${\mathcal B}$ is a partition of a set $W$ of size $w$ into subsets of size $n$.
We can assume, without loss of generality, that $W=[0,w-1]$.

A block of  ${\mathcal A}\star{\mathcal B}$ is of the form $A\star B$ with $A\in{\mathcal A}$ and $B\in{\mathcal B}$.
Hence we have $$(A\star B)^+=\{a+bv \ | \ a\in A^+; b\in B\},\quad (A\star B)^-=\{a-bv \ | \ a\in A^-; b\in B\}$$
so that $A\star B$ has precisely $\frac{kn}{2}$ positive elements and $\frac{kn}{2}$ negative elements.

Let $\alpha^+$ and $\alpha^-$ be the sums of the elements of $A^+$ and $A^-$, respectively. Also, let $\beta$ be the sum of the elements of $B$.
Then, considering that $A^+$ and $A^-$ have size $\frac{k}{2}$ since $\mathcal A$ is shiftable, and considering that $B$ has size $n$,
the sum of the elements of $(A\star B)^+$ is $n\alpha^+ \ + \ \frac{k}{2}\beta v$ whereas
the sum of the elements of $(A\star B)^-$ is $n\alpha^- \ - \ \frac{k}{2}\beta v$.
It follows that the sum of the elements of $A\star B$ is 
$n(\alpha^+ \ + \ \alpha^-)$ which is zero since each $A\in{\mathcal A}$ is zero-sum by assumption.

We have $$\bigcup_{A\in{\mathcal A}}A^+=V^+,\quad \bigcup_{A\in{\mathcal A}}A^-=V^-,\quad{\rm and}\quad
\bigcup_{B\in{\mathcal B}}B=[0,w-1].$$ 
Then, if $U$ is the union of all the blocks of ${\mathcal A}\star{\mathcal B}$ we can write:
$$U^+=\bigcup_{i=0}^{w-1}(V^+ \ + \ iv)\quad{\rm and} \quad U^-=\bigcup_{i=0}^{w-1}(V^- \  - \ iv).$$
Thus we see that $U$ is a half-set of $[vw,-vw]$.

 We conclude that ${\mathcal A}\star{\mathcal B}$ is a shiftable $(vw,kn)$ Heffter system.

 \medskip
Case 2: $r=2$.

\medskip
Let $\{\mathcal A,\mathcal A'\}$ be the resolution of a shiftable $(v,k;2)$ Heffter space
and let $\{\mathcal B,\mathcal B'\}$ be the resolution of a $(w,n;2)$ space.
Then both  ${\mathcal A}\star{\mathcal B}$ and  ${\mathcal A}'\star{\mathcal B}'$ are shiftable $(vw,kn)$ Heffter systems in view of the first case.
Let $A\star B$ be a block of the former system and let $A'\star B'$ be
a block of the latter. Assume that that there are two elements $x_1$, $x_2$  of $A\star B$ which also belong to $A'\star B'$. Consider for instance the case that both $x_1$ and $x_2$ are positive
(we can reason similarly in the other cases). Then we have $x_1=a_1+b_1v=a'_1+b'_1v$ and $x_2=a_2+b_2v=a'_2+b'_2v$
for suitable elements $a_1, a_2\in A^+$, $b_1, b_2\in B$, $a'_1, a'_2\in A'^+$ and $b'_1, b'_2\in B'$. 
It follows, in particular, that $a_1-a'_1$  and $a_2-a'_2$  are multiples of $v$. Considering that $A^+$ and $A'^+$ are both contained in $[1,v]$, this necessarily implies that $a_1=a'_1$ and $a_2=a'_2$.
Consequently, we also have $b_1=b'_1$ and $b_2=b'_2$. Thus $\{a_1,a_2\}\subset A\cap A'$ and $\{b_1,b_2\}\subset B\cap B'$. This is possible only for $a_1=a_2$ and $b_1=b_2$ since 
$\mathcal A$ is orthogonal to $\mathcal A'$ and $\mathcal B$ is orthogonal to $\mathcal B'$. Thus $x_1=x_2$, i.e., $A\star B$ and $A'\star B'$ have at most one common element.
We conclude that  ${\mathcal A}\star{\mathcal B}$ and  ${\mathcal A'}\star{\mathcal B}'$ are orthogonal, i.e., $\{{\mathcal A}\star{\mathcal B},{\mathcal A}^\prime\star{\mathcal B}^\prime\}$ is the resolution of a shiftable $(vw,kn;2)$ Heffter space.
 
 \medskip
Case 3: $r>2$.

\medskip
Let $\{\mathcal A_1,\dots,\mathcal A_r\}$ be the resolution of a shiftable $(v,k;r)$ Heffter space
and let $\{\mathcal B_1,\dots,\mathcal B_r\}$ be the resolution of a $(w,k;r)$ space.
In view of Case 1, $\mathcal A_i \star \mathcal B_i$ is a shiftable $(vw,kn)$ Heffter system for each $i$. Also, in view of Case 2, $\mathcal A_i \star \mathcal B_i$ is orthogonal to $\mathcal A_j \star \mathcal B_j$
for $i\neq j$.
Thus $\{\mathcal A_1 \star \mathcal B_1, \dots, \mathcal A_r \star \mathcal B_r\}$ is a set of $r$ mutually orthogonal shiftable $(vw,kn;r)$ Heffter systems, i.e., 
a shiftable $(vw,kn;r)$ Heffter space.
\end{proof}

\begin{ex}
Following the instructions in the proof of
Theorem \ref{recursive} we construct a shiftable $(40,4;3)$ Heffter space
using the shiftable $(20,4;3)$ Heffter space of Example \ref{20} as $\mathcal A$,
and the trivial $(2,1;3)$ space as $\mathcal B$. Thus the point set of $\mathcal B$ is $W=[0,1]$ and its resolution
is $\{\{0\},\{1\}\}$ repeated three times.
The point set of the resulting $(40,4;3)$ Heffter space is $U=V\star[0,1]=U^+ \ \cup \ U^-$
with $$U^+=\{2, 4, 6, 9, 10, 11, 12, 15, 17, 19, 22, 24, 26, 29, 30, 31, 32, 35, 37, 39\},$$
$$U^-=-\{1, 3, 5, 7, 8, 13, 14, 16, 18, 20, 21, 23, 25, 27, 28, 33, 34, 36, 38, 40\}$$
and its resolution $\{\mathcal {Q}_1,\mathcal{Q}_2,\mathcal{Q}_3\}$ is as follows where the blocks in bold are those
of Example \ref{20}.

\medskip\noindent
\begin{center}
\begin{tabular}{|l|c|r|c|r|c|r|c|r|c|r|c|r|}
\hline {\quad\quad\quad$\mathcal {Q}_1$}    \\
\hline $\bf\{-1, 2, 17, -18\}$\\
\hline $\{-21, 22, 37, -38\}$\\
\hline $\bf\{-3, 6, 10, -13\}$\\
\hline $\{-23, 26, 30, -33\}$\\
\hline $\bf\{4, -5, -8, 9\}$ \\
\hline $\{24, -25, -28, 29\}$ \\
\hline $\bf\{-7, 12, 15, -20\}$  \\
\hline $\{-27, 32, 35, -40\}$  \\
\hline $\bf\{ 11, -14, -16, 19\}$  \\
\hline $\{ 31, -34, -36, 39\}$  \\
\hline
\end{tabular}\quad\quad\quad
\begin{tabular}{|l|c|r|c|r|c|r|c|r|c|r|c|r|}
\hline {\quad\quad\quad$\mathcal {Q}_2$}    \\
\hline $\bf\{-1,6,9, -14\}$\\
\hline $\{-21,26,29, -34\}$\\
\hline $\bf\{2, -5, -7, 10\}$  \\
\hline $\{22, -25, -27, 30\}$  \\
\hline $\bf\{-3, 4, 19, -20\}$\\
\hline $\{-23, 24, 39, -40\}$\\
\hline $\bf\{-8, 11, 15, -18\}$ \\
\hline $\{-28, 31, 35, -38\}$ \\
\hline $\bf\{12, -13, -16, 17\}$  \\
\hline $\{32, -33, -36, 37\}$  \\
\hline
\end{tabular}\quad\quad\quad
\begin{tabular}{|l|c|r|c|r|c|r|c|r|c|r|c|r|}
\hline {\quad\quad\quad$\mathcal{Q}_3$}    \\
\hline $\bf\{-1, 10, 11, -20\}$\\
\hline $\{-21, 30, 31, -40\}$\\
\hline $\bf\{2, -8, -13, 19\}$  \\
\hline $\{22, -28, -33, 39\}$  \\
\hline $\bf\{-3, 9, 12, -18\}$ \\
\hline $\{-23, 29, 32, -38\}$ \\
\hline $\bf\{4, -7, -14, 17\}$\\
\hline $\{24, -27, -34, 37\}$\\
\hline $\bf\{-5, 6, 15, -16\}$  \\
\hline $\{-25, 26, 35, -36\}$  \\
\hline
\end{tabular}
\end{center}
\end{ex}

\begin{ex} 
Following the instructions in the proof of
Theorem \ref{recursive} we construct a shiftable $(80,8;3)$ Heffter space
using again the shiftable $(20,4;3)$ Heffter space of Example \ref{20} as $\mathcal A$,
and using the one-factorization 
$$\bigl{\{}\{0,1\},\{2,3\}\},\quad \{\{0,2\},\{1,3\}\},\quad \{\{0,3\},\{1,2\}\}\bigl\}$$
of the complete graph on $W=[0,3]$ as $(4,2;3)$ space $\mathcal B$. 
The resolution $\{\mathcal {Q}_1,\mathcal{Q}_2,\mathcal{Q}_3\}$ of the resulting $(80,8;3)$ Heffter space is as follows
where the ``sub-blocks" in bold are the blocks of Example \ref{20}.

\small
\medskip\noindent
\begin{center}
\begin{tabular}{|l|c|r|c|r|c|r|c|r|c|r|c|r|}
\hline {\quad\quad\quad$\mathcal {Q}_1$}    \\
\hline $\{{\bf-1, 2, 17, -18},-21,22,37,-38\}$\\
\hline $\{-41, 42, 57, -58,-61,62,77,-78\}$\\
\hline $\{{\bf-3, 6, 10, -13},-23,26,30,-33\}$\\
\hline $\{-43, 46, 50, -53,-63,66,70,-73\}$\\
\hline $\{{\bf4, -5, -8, 9},24,-25,-28,29\}$ \\
\hline $\{44, -45, -48, 49,64,-65,-68,69\}$ \\
\hline $\{{\bf-7, 12, 15, -20}, -27, 32, 35, -40\}$  \\
\hline $\{-47, 52, 55, -60, -67, 72, 75, -80\}$  \\
\hline $\{{\bf11, -14, -16, 19}, 31, -34, -36, 39\}$  \\
\hline $\{ 51, -54, -56, 59, 71, -74, -76, 79\}$  \\
\hline
\end{tabular}\quad\quad\quad
\begin{tabular}{|l|c|r|c|r|c|r|c|r|c|r|c|r|}
\hline {\quad\quad\quad$\mathcal {Q}_2$}    \\
\hline $\{{\bf-1,6,9, -14},  -41, 46, 49, -54\}$\\
\hline $\{-21,26,29, -34, -61, 66, 69, -74\}$\\
\hline $\{{\bf2, -5, -7, 10}, 42, -45, -47, 50\}$  \\
\hline $\{22, -25, -27, 30, 62, -65, -67, 70\}$  \\
\hline $\{{\bf-3, 4, 19, -20}, -43, 44, 59, -60\}$\\
\hline $\{-23, 24, 39, -40, -63, 64, 79, -80\}$\\
\hline $\{{\bf-8, 11, 15, -18}, -48, 51, 55, -58\}$ \\
\hline $\{-28, 31, 35, -38, -68, 71, 75, -78\}$ \\
\hline $\{{\bf12, -13, -16, 17}, 52, -53, -56, 57\}$  \\
\hline $\{32, -33, -36, 37, 72, -73, -76, 77\}$  \\
\hline
\end{tabular}
\end{center}

\begin{center}
\begin{tabular}{|l|c|r|c|r|c|r|c|r|c|r|c|r|}
\hline {\quad\quad\quad$\mathcal {Q}_3$}    \\
\hline $\{{\bf-1, 10, 11, -20}, -61, 70, 71, -80\}$\\
\hline $\{-21, 30, 31, -40, -41, 50, 51, -60\}$\\
\hline $\{{\bf2, -8, -13, 19}, 62, -68, -73, 79\}$  \\
\hline $\{22, -28, -33, 39, 42, -48, -53, 59\}$  \\
\hline $\{{\bf-3, 9, 12, -18}, -63, 69, 72, -78\}$ \\
\hline $\{-23, 29, 32, -38, -43, 49, 52, -58\}$ \\
\hline $\{{\bf4, -7, -14, 17}, 64, -67, -74, 77\}$\\
\hline $\{24, -27, -34, 37, 44, -47, -54, 57\}$\\
\hline $\{{\bf-5, 6, 15, -16}, -65, 66, 75, -76\}$  \\
\hline $\{-25, 26, 35, -36, -45, 46, 55, -56\}$  \\
\hline
\end{tabular}
\end{center}
\end{ex}

\section{Shiftable Heffter Nets of degree $3$ from Pandiagonal Magic Squares}\label{Magic}

In this section, for a given $n \times n$ array $A$, we call {\it right diagonals} of $A$ the $n$-tuples $d_0^r(A)$,  $d_1^r(A)$, \dots,  $d_{n-1}^r(A)$  
defined by $$d_j^r(A)=(a_{1,j+1},a_{2,j+2},\dots,a_{n,j+n})\quad {\rm for \ } 0\leq j\leq n-1$$
where the subscripts have to be understood modulo $n$ in  the set $[1,n]$.
Analogously, the {\it left diagonals} of $A$ are the $n$-tuples $d_0^\ell(A)$,  $d_1^\ell(A)$, \dots,  $d_{n-1}^\ell(A)$ defined by 
$$d_j^{\ell}(A)=(a_{1,j-1},a_{2,j-2},\dots,a_{n,j-n})\quad {\rm for \ } 0\leq j\leq n-1$$
with the same agreement on the subscripts as above.
Note that $d_0^r(A)$ and $d^\ell_1(A)$ are the main and the secondary diagonal of $A$, respectively.
For $n$ even, let us say that a right diagonal $d^r_j(A)$ is {\it good} if $j$ is even and that a left diagonal $d^\ell_j(A)$ is {\it good} if $j$ is odd.
The following remark is straightforward.
\begin{prop}\label{diagonals}
Let $A$ be a $n \times n$ array whose entries are pairwise distinct. Denote by $R$, $C$, $D^r$ and $D^\ell$ the sets of rows, columns, right diagonals and left diagonals of $A$.
Also, for $n$ even, denote by $G$ the set of all good diagonals of $A$. Then we have:
\begin{itemize}
\item[] for $n$ odd, $\{R,C,D^r,D^\ell\}$ is the resolution of a $(n^2,n;4)$ space;
\item[] for $n$ even, $\{R,C,D^r\}$, $\{R,C,D^\ell\}$, and $\{R,C,G\}$ are the resolutions of a $(n^2,n;3)$ space.
\end{itemize}
\end{prop}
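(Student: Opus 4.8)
The plan is to verify, in each parity case, the two defining properties of a $(v,k;r)$ space as formulated in the introduction: namely that each listed family ($R$, $C$, $D^r$, $D^\ell$, $G$) is a partition of the point set (here the set of $n^2$ entries of $A$), and that any two of the families in a given bracketed triple/quadruple are mutually orthogonal, i.e.\ any row/column/diagonal meets any other in at most one cell. Equivalently, I would check that each family is a resolution class of size $n$ consisting of $n$-subsets, and that distinct families pairwise induce orthogonal partitions; by the equivalence recalled in the introduction (a $(v,k;r)$ space $=$ $r$ mutually orthogonal partitions of a $v$-set into $k$-subsets), this is exactly what is needed, and the identity $vr=bk$ with $v=n^2$, $k=n$, $b=n$ is automatic.

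First I would dispose of $R$ and $C$: the rows obviously partition the $n^2$ cells into $n$ rows of size $n$, likewise the columns, and a row and a column share exactly one cell, so $\{R,C\}$ is always the resolution of an $(n^2,n;2)$ space. For the right diagonals, note that $d_j^r(A)$ picks, in row $i$, the cell in column $j+i \pmod n$; as $j$ ranges over $[0,n-1]$ this column index runs over all residues, so the $n$ right diagonals partition the cells — each cell $(i,c)$ lies in $d^r_{c-i}(A)$ and in no other. The same argument works verbatim for the left diagonals using column index $j-i \pmod n$. Orthogonality between a row and any diagonal, or a column and any diagonal, is immediate since a row (resp.\ column) meets each diagonal in exactly one cell. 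The only substantive point is orthogonality of $D^r$ with $D^\ell$: a cell in $d_j^r$ and $d_{j'}^\ell$ must satisfy $c-i\equiv j$ and $c+i\equiv j'$ modulo $n$, so $2i\equiv j'-j$; when $n$ is odd this determines $i$ uniquely, hence $c$ uniquely, hence at most one common cell, giving the $(n^2,n;4)$ space $\{R,C,D^r,D^\ell\}$. When $n$ is even this argument fails — that is precisely the main (and only) obstacle — which is why for even $n$ one cannot combine $D^r$ and $D^\ell$ and instead keeps only one of them, yielding the two resolutions $\{R,C,D^r\}$ and $\{R,C,D^\ell\}$.

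It remains to treat the set $G$ of good diagonals for $n$ even, and to show $\{R,C,G\}$ is a resolution of a $(n^2,n;3)$ space. Here $G$ consists of the right diagonals $d_j^r$ with $j$ even together with the left diagonals $d_j^\ell$ with $j$ odd, so $|G|=n/2+n/2=n$. I would check $G$ partitions the cells: a cell $(i,c)$ lies on the right diagonal $d^r_{c-i}$ and the left diagonal $d^\ell_{c+i}$, and since $n$ is even exactly one of $c-i$, $c+i$ is even (they differ by $2i$, so they have the same parity — wait, they have the same parity; let me instead use that $c-i$ and $c+i$ are both even or both odd, and note $G$ also must be re-examined). Let me restate the coordinate: if $c-i$ and $c+i$ have the same parity $p\in\{0,1\}$, then $(i,c)$ lies on exactly one good diagonal, namely $d^r_{c-i}$ if $p=0$ and $d^\ell_{c+i}$ if $p=1$; so $G$ is a partition into $n$ sets of size $n$. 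For orthogonality: $R$ and $C$ each meet every diagonal (good or not) in one cell, so $\{R,G\}$ and $\{C,G\}$ are fine; and two distinct good diagonals of the same type are parallel (disjoint), so one only needs to rule out two common cells of a good right diagonal $d^r_j$ ($j$ even) and a good left diagonal $d^\ell_{j'}$ ($j'$ odd) — but a common cell forces $2i\equiv j'-j\pmod n$, and since $j'-j$ is odd while $n$ is even this congruence has \emph{no} solution, so good diagonals of opposite type are in fact disjoint too. Hence $G$ is itself a parallelism of the cells and $\{R,C,G\}$ is a resolution of a $(n^2,n;3)$ space, completing the proof.
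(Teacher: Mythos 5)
Your proof is correct, and since the paper offers no argument at all (it simply declares the proposition straightforward), your coordinate/congruence verification is exactly the intended routine check: a cell $(i,c)$ lies on $d^r_{c-i}$ and $d^\ell_{c+i}$, a common cell of $d^r_j$ and $d^\ell_{j'}$ forces $2i\equiv j'-j\pmod n$, which has a unique solution for $n$ odd (giving the degree-$4$ space) and no solution when $j'-j$ is odd and $n$ is even (giving that the good diagonals form a single partition orthogonal to $R$ and $C$). The momentary parity slip about $c-i$ and $c+i$ is self-corrected, and the final statement you use (they have the same parity, so each cell lies on exactly one good diagonal) is the right one.
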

Note that for $n$ even, we cannot say that $\{R, C, D^r, D^\ell\}$ is the resolution of a $(n^2,n;4)$ Heffter space since, for instance, 
the right diagonal $d^r_1(A)$ and the left diagonal $d^\ell_1(A)$ have the two elements $a_{\frac{n}{2},\frac{n}{2}+1}$ and $a_{n,1}$ in common.

In this section we show how a suitable class of magic squares allows to obtain a direct construction for shiftable $(n^2,n;3)$ Heffter spaces with $n\equiv 0 \pmod 4$.
From now on, two entries of a $n\times n$ array $A$ with $n$ even will be said {\it conjugates} if they are of the form $a_{i,j}$ and $a_{i+\frac{n}{2},j+\frac{n}{2}}$ for suitable indices $i$, $j$.

We recall that a {\it magic square} of order $n$ with {\it magic constant} $d$ is an $n\times n$ array 
with integer entries such that the rows, the columns, the main diagonal and the secondary diagonal sum to $d$.
It is {\it normal} when its entries
are all the integers in $[1,n^2]$.
In this case a trivial doubling counting shows that the magic constant $d$ is equal to
$\frac{n(n^2+1)}{2}$. 
A normal magic square is {\it pandiagonal} if every right diagonal and every left diagonal also sums to the magic constant $d$.
We need pandiagonal magic squares in which all pairs of conjugates elements sum up to $n^2+1$. We will call them {\it Margossian magic squares}
since we have found a method for their construction attributed to the Ottoman Engineer and Mathematician Aram Margossian (see Chapter 7 of \cite{BC}).
For the convenience of the reader we describe this method in detail below.

\begin{lem}\label{margossian}
There exists a Margossian magic square of order $n$ for every doubly-even integer $n$.
\end{lem}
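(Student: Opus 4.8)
The plan is to give an explicit construction following Margossian's method, verify the magic and pandiagonal properties by direct summation, and finally check the conjugate-pair condition. Write $n=4\ell$ with $\ell\geq 1$. Margossian's method builds the square from two auxiliary $n\times n$ arrays over the digit set $[0,n-1]$: one array $U$ recording the ``high digit'' and one array $L$ recording the ``low digit'' in base $n$, so that the final entry in cell $(i,j)$ is $n\cdot u_{i,j}+\ell_{i,j}+1$. The key is to choose $U$ and $L$ so that (a) each array has every symbol of $[0,n-1]$ exactly once in each row, each column, each right diagonal and each left diagonal — i.e.\ each is a \emph{pandiagonal Latin square} of order $n$ — and (b) the pair $(u_{i,j},\ell_{i,j})$ runs over all of $[0,n-1]^2$ as $(i,j)$ runs over the grid, i.e.\ $U$ and $L$ are \emph{orthogonal}. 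Conditions (a)+(b) force every line (row, column, either diagonal) of the resulting array to consist of $n$ entries whose high digits are a permutation of $[0,n-1]$ and whose low digits are a permutation of $[0,n-1]$; hence each such line sums to $n\cdot\frac{n(n-1)}{2}+\frac{n(n-1)}{2}+n=\frac{n(n^2+1)}{2}=d$. Orthogonality together with surjectivity onto $[0,n-1]^2$ guarantees the entries are exactly $[1,n^2]$, so the square is normal and pandiagonal.

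Next I would write down the explicit formulas Margossian uses. For doubly-even $n=4\ell$ the natural choice (this is the content of the method described in Chapter 7 of \cite{BC}) is to take linear-type rules such as $u_{i,j}\equiv \alpha i+\beta j+\sigma_{i,j}\pmod n$ and $\ell_{i,j}\equiv \gamma i+\delta j+\tau_{i,j}\pmod n$, where $\alpha,\beta,\gamma,\delta$ are chosen units/non-units modulo $n$ arranged so that $\alpha\delta-\beta\gamma$ is a unit (this is what makes $U,L$ orthogonal) while each of the two forms individually is constant along no line and hits each residue once along every row, column and both broken diagonals (this needs $\alpha,\beta,\alpha+\beta,\alpha-\beta$ and similarly $\gamma,\delta,\gamma+\delta,\gamma-\delta$ to be invertible mod $n$ — impossible for a single linear form when $n$ is even, which is exactly why a small correction term $\sigma_{i,j},\tau_{i,j}$ depending only on the parities of $i$ and $j$ is inserted). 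I would carry out the verification line by line: for a row ($i$ fixed) the map $j\mapsto u_{i,j}$ is a bijection on $\Z_n$; for a column similarly; for a right diagonal one substitutes $i=t,\ j=j_0+t$ and checks the induced map $t\mapsto u_{t,j_0+t}$ is a bijection; for a left diagonal $i=t,\ j=j_0-t$ likewise; then the same four checks for $L$; and finally the orthogonality check that $(i,j)\mapsto(u_{i,j},\ell_{i,j})$ is a bijection $\Z_n^2\to\Z_n^2$.

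Finally I would handle the conjugate condition, which is the only genuinely new requirement beyond ``pandiagonal normal magic square.'' Conjugate cells are $(i,j)$ and $(i+\tfrac n2,j+\tfrac n2)$; their entries sum to $n^2+1$ iff $u_{i,j}+u_{i+\frac n2,j+\frac n2}=n-1$ and $\ell_{i,j}+\ell_{i+\frac n2,j+\frac n2}=n-1$. With the linear-plus-parity-correction formulas this becomes the pair of congruences $\alpha\cdot\tfrac n2+\beta\cdot\tfrac n2+\bigl(\sigma_{i,j}+\sigma_{i+\frac n2,j+\frac n2}\bigr)\equiv -1-2(\alpha i+\beta j)\pmod n$ and its $L$-analogue; since $\tfrac n2$ has the same parity issues this is exactly where the correction terms must be tuned. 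I expect this conjugate-pair bookkeeping to be the main obstacle: it over-determines the parameters, and one must verify that Margossian's specific choice of the parity corrections $\sigma,\tau$ — rather than some other valid pandiagonal choice — makes both the ``bijection along every line'' conditions and the ``conjugates sum to $n-1$'' conditions hold simultaneously. Once the formulas are pinned down, all of this reduces to finitely many congruence identities in $\Z_n$ that hold for every doubly-even $n$, and the lemma follows; I would close by remarking that plugging $\ell=1$ recovers the classical $4\times 4$ example to sanity-check the construction.
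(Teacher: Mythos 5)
There is a genuine gap, and it lies in the structural assumption your whole plan rests on. You require the two digit arrays $U$ and $L$ each to be a \emph{pandiagonal} Latin square of order $n$ (every row, column, right diagonal and left diagonal a transversal). Such squares (Knut Vik designs) exist only when $\gcd(n,6)=1$; for doubly-even $n$ they do not exist at all, and no parity-correction terms $\sigma_{i,j},\tau_{i,j}$ can rescue condition (a). Your own sum computation, which needs \emph{every} line to carry each high digit and each low digit exactly once, is therefore asking for something impossible in exactly the case $n\equiv 0\pmod 4$ that the lemma is about. Moreover, even on its own terms the proposal never pins down the formulas: you explicitly defer the choice of $\sigma,\tau$ and flag the conjugate condition as an unresolved obstacle, so what you have is a plan rather than a proof.

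The paper's construction avoids the impossibility by \emph{not} making the digit arrays Latin. It sets $a_{i,j}=(\tfrac n2 i+j,\; i+\tfrac n2 j)\in\Z_n\times\Z_n$; this map is a bijection of $\Z_n^2$ (its determinant is $\tfrac{n^2}{4}-1\equiv -1\pmod n$ since $4\mid n$), but the high-digit array is constant-free only along rows and diagonals: down a column it takes just the two values $x$ and $x+\tfrac n2$, each $\tfrac n2$ times (dually for the low digit along rows). The magic sums are then restored by the permutation $\pi$, whose defining feature is $\pi(x)+\pi(x+\tfrac n2)=n-1$ for all $x$: lines that are transversals in a digit (rows for the high digit, columns for the low digit, both diagonals in both digits, because $\tfrac n2\pm1$ are units mod $n$ when $n$ is doubly even) sum to $\tfrac{n(n-1)}2$, and lines that are not transversals split into $\tfrac n2$ pairs $\{x,x+\tfrac n2\}$ each contributing $n-1$, giving the same total. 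The conjugate condition comes from the same identity, since $a_{i+\frac n2,\,j+\frac n2}=a_{i,j}+(\tfrac n2,\tfrac n2)$. So the key idea you are missing is precisely this interplay between a non-Latin digit pattern and a sum-complementing relabelling, rather than orthogonal pandiagonal Latin squares, which cannot exist here.
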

\begin{proof}
Construct the {\it Graeco-Latin square} $A=(a_{i,j})$ with entries in $\Z_n\times\Z_n$ defined by

\smallskip\begin{center}
$a_{i,j}=(\frac{n}{2} i+j,i+\frac{n}{2} j)\quad 1\leq i\leq n, \ 1\leq j\leq n,$
\end{center}
where the elements of $\Z_n$ are represented with the numbers from $0$ to $n-1$.
\smallskip
Then consider the permutation $\pi$ on $\Z_n$ fixing all elements from $0$ to $\frac{n}{2}-1$ and reversing the order of the remaining 
elements $\frac{n}{2}, \frac{n}{2}+1,\dots, n-1$.
Thus, in a formula, we have
$$\pi(x)=\begin{cases}x & {\rm if} \ x\in \{0,1,\dots,\frac{n}{2}-1\}; \medskip\cr \frac{3}{2}n-1-x & {\rm if} \ x\in \{\frac{n}{2},\frac{n}{2}+1,\dots,n-1\}.\end{cases}$$
Then, the matrix $M(n)$ obtainable from $A$ by replacing each entry $(x,y)$ with $n\cdot\pi(x)+\pi(y) +1$ is a Margossian magic square.
\end{proof}

\begin{ex}\label{ex:M4}
Applying the Margossian's construction with $n=4$ we first obtain the Graeco-Latin square
$$ A=\begin{array}{|r|r|r|r|r|r|r|r|} \hline
33 & 01 & 13 & 21\\ \hline
10 & 22 & 30 & 02\\ \hline
31 & 03 & 11 & 23\\ \hline
12 & 20 & 32 & 00\\ \hline
\end{array}$$
and then we get the Margossian square
$$M(4)=\begin{array}{|r|r|r|r|r|r|r|r|} \hline
11 & 2 & 7 & 14\\ \hline
5 & 16 & 9 & 4\\ \hline
10 & 3 & 6 & 15\\ \hline
8 & 13 & 12 & 1\\ \hline
\end{array}$$
\end{ex}

\begin{ex}\label{ex:M8}
Applying the Margossian's construction with $n=8$ we first obtain the Graeco-Latin square
$$ A=\begin{array}{|r|r|r|r|r|r|r|r|} \hline
55 & 61 & 75 & 01 & 15 & 21 & 35 & 41\\ \hline
1 6 & 2 2 & 3 6 & 4 2 & 5 6 & 6 2 & 7 6 & 0 2\\ \hline
57 & 63 & 77 & 03 & 17 & 23 & 37 & 43\\ \hline
10 & 24 & 30 & 44 & 50 & 64 & 70 & 04\\ \hline
51 & 65 & 71 & 05 & 11 & 25 & 31 & 45\\ \hline
12 & 26 & 32 & 46 & 52 & 66 & 72 & 06\\ \hline
53 & 67 & 73 & 07 & 13 & 27 & 33 & 47\\ \hline
14 & 20 & 34 & 40 & 54 & 60 & 74 & 00\\ \hline
\end{array}
$$
and then we get the Margossian square
$$M(8)=\begin{array}{|r|r|r|r|r|r|r|r|} \hline
55 & 42 & 39 & 2 & 15 & 18 & 31 & 58\\ \hline
14 & 19 & 30 & 59 & 54 & 43 & 38 & 3\\ \hline
53 & 44 & 37 & 4 & 13 & 20 & 29 & 60\\ \hline
9 & 24 & 25 & 64 & 49 & 48 & 33 & 8\\ \hline
50 & 47 & 34 & 7 & 10 & 23 & 26 & 63\\ \hline
11 & 22 & 27 & 62 & 51 & 46 & 35 & 6\\ \hline
52 & 45 & 36 & 5 & 12 & 21 & 28 & 61\\ \hline
16 & 17 & 32 & 57 & 56 & 41 & 40 & 1\\ \hline
\end{array}$$
\end{ex}

Now we show how every Margossian magic square of order $n$ quickly leads to a shiftable $(3,n)$ Heffter net.

\begin{thm}\label{thm:Magic}
There exists a shiftable $(n^2,n;3)$ Heffter space for every positive integer $n\equiv0$ $($mod $4)$.
\end{thm}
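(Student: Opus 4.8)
The plan is to start from a Margossian magic square $M=M(n)$ of order $n$, which exists by Lemma~\ref{margossian} since $n\equiv0\pmod4$ is doubly-even. By Proposition~\ref{diagonals}, the rows $R$, columns $C$, and good diagonals $G$ of $M$ form the resolution of an (ordinary) $(n^2,n;3)$ space on the point set $[1,n^2]$. So the only thing missing is to turn this into a \emph{shiftable} integer Heffter space. The natural idea is to translate every entry: replace each $a_{i,j}$ by $a_{i,j}-\frac{n^2+1}{2}$. Call the resulting array $M'$ with entries $m'_{i,j}$. Since the entries of $M$ are exactly $[1,n^2]$ and $n^2$ is even, $\frac{n^2+1}{2}$ is a half-integer, so I would instead double first: work with $2a_{i,j}-(n^2+1)$, which produces the set of odd integers $\{\pm1,\pm3,\dots,\pm(n^2-1)\}$, a half-set of $[-(n^2-1),n^2-1]$ — but that has only $n^2$ nonzero... wait, that gives a half-set of size $n^2$ inside an interval whose $2v+1$ would be $2n^2-1$, not matching $(n^2,n;3)$. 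The cleaner route, and surely the intended one, is: the points should fill a half-set $V$ of $[-n^2,n^2]$, so $|V|=n^2$; map $a_{i,j}\mapsto a_{i,j}-\frac{n^2+1}{2}$ conceptually and then note the Margossian conjugate property is exactly what repairs parity. I would present it as follows.

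First I would observe that the Margossian property — conjugate entries $a_{i,j}$ and $a_{i+n/2,j+n/2}$ sum to $n^2+1$ — means that if we define $V^+$ to consist of the entries exceeding $\frac{n^2+1}{2}$ and $V^-$ of those below it (there is no entry equal to $\frac{n^2+1}{2}$), then within each conjugate pair one entry is "large" and one is "small". Now define a new array $N$ by setting $n_{i,j}=a_{i,j}-\frac{n^2}{2}$ if $a_{i,j}>\frac{n^2}{2}$ and $n_{i,j}=a_{i,j}-\frac{n^2}{2}-1$ if $a_{i,j}\le\frac{n^2}{2}$, so the entries become the nonzero integers in $[-\frac{n^2}{2},\frac{n^2}{2}]$ — that is a half-set of $[-\frac{n^2}{2},\frac{n^2}{2}]$ of size $n^2$. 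Hmm, but that would give a $(\frac{n^2}{2},n;3)$-type parameter, not $(n^2,n;3)$. So the correct normalization must keep the half-set inside $[-n^2,n^2]$: I should take $V^+=\{2a_{i,j}-1 : a_{i,j}>\frac{n^2+1}{2}\}$ and... Let me instead just commit to the map $\varphi(x)=2x-(n^2+1)$; then $\varphi([1,n^2])$ is the set of all odd integers in $[-(n^2-1),n^2-1]$, which is a half-set of $[-(n^2-1),n^2-1]$ — wrong interval again. I think the honest description is: one wants a bijection from $[1,n^2]$ onto a half-set $V$ of $[-n^2,n^2]$ that (a) sends conjugate pairs to pairs $\{x,-x\}$ won't quite work either since conjugates sum to $n^2+1$ not to the midpoint pair. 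I'll therefore state the construction as: let $V^+$ be the image of the "large" entries under $x\mapsto x-\frac{n^2}{2}$ (giving $[1,\frac{n^2}{2}]$ shifted...) — no. The point I am confident about is the \textbf{mechanism}, so in the writeup I would: (1) fix the explicit relabeling that turns $[1,n^2]$ into a half-set $V$ of $[-n^2,n^2]$ so that each conjugate pair becomes a pair $\{a,b\}$ with $a\in V^+$, $b\in V^-$, $a+b=0$; such a relabeling exists precisely because conjugates partition $[1,n^2]$ into $\frac{n^2}{2}$ pairs each summing to $n^2+1$, and we may send the large element of the $t$-th pair to $t$ and the small to $-t$ (then $V=[-\frac{n^2}{2},\frac{n^2}{2}]\setminus\{0\}$, and $2v+1=n^2+1$... so $v=\frac{n^2}{2}$, giving a $(\frac{n^2}{2},n;3)$ space).

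Given that tension, I suspect the actual statement being proved uses $v=n^2$ with a half-set that is \emph{not} an interval, obtained by the map $x\mapsto x$ for large entries and $x\mapsto x-(n^2+1)$ for small entries, so that each conjugate pair $\{x,\,n^2+1-x\}$ with $x>\frac{n^2+1}{2}$ maps to $\{x,\,-x\}$; then $V^+=(\frac{n^2+1}{2},n^2]\cap\Z$ and $V^-=-V^+$, which indeed is a half-set of $[-n^2,n^2]$ of size $n^2$, and $2v+1=2n^2+1$. \textbf{This} is the normalization I would use. With it, each row/column/good-diagonal $B$ of $M$, which summed to $d=\frac{n(n^2+1)}{2}$, becomes a block $B'$ whose sum is $d$ minus $(n^2+1)$ times the number of small entries in $B$. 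So zero-sum is equivalent to: every row, column, and good diagonal of the Margossian square contains exactly $\frac{n}{2}$ entries that are $\le\frac{n^2+1}{2}$. I would prove this directly from the explicit formula $m_{i,j}=n\pi(x)+\pi(y)+1$ with $x=\frac{n}{2}i+j$, $y=i+\frac{n}{2}j$ in $\Z_n$: an entry is "small" iff $\pi(x)<\frac{n}{2}$, i.e. iff $x\in\{0,\dots,\frac n2-1\}$, i.e. iff $\frac n2 i+j\in\{0,\dots,\frac n2-1\}\pmod n$. Along a row (fixed $i$, $j$ ranging over $\Z_n$), $\frac n2 i+j$ ranges over all of $\Z_n$, hitting $\{0,\dots,\frac n2-1\}$ exactly $\frac n2$ times; the same for columns; and along a good diagonal one checks the relevant linear form in the single running index still sweeps $\Z_n$ a fixed number of times with the right count. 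That count — that good diagonals also split $\frac n2$/$\frac n2$ — is the one computation I would actually carry out, and it is where the hypothesis $n\equiv0\pmod4$ (doubly-even, so that the "good diagonal" notion is compatible with the $\pi$-threshold at $n/2$) genuinely enters.

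The shiftability half-count is then automatic: an entry is in $V^+$ iff it is large iff it is \emph{not} small, so each block also has exactly $\frac n2$ positive elements, which is exactly the shiftability condition. Thus $\{R',C',G'\}$ is the resolution of a shiftable $(n^2,n;3)$ Heffter space on the half-set $V$ of $[-n^2,n^2]$, completing the proof. The main obstacle, as indicated, is the diagonal bookkeeping: showing that every \emph{good} diagonal of the Margossian square meets the "small" set in exactly $n/2$ positions. Rows and columns are immediate because the relevant linear form is a bijection of $\Z_n$ as $j$ (resp.\ $i$) runs over $\Z_n$; for a good right diagonal $d^r_{2t}$ the running index $k$ gives entry position $(k,2t+k)$ and the form becomes $\frac n2 k + (2t+k) = (\frac n2+1)k+2t$, whose image as $k$ ranges over $\Z_n$ is a coset of the subgroup generated by $\gcd(\frac n2+1,n)$ — and here one uses that $n$ is doubly-even so $\frac n2+1$ is odd, hence coprime to $n$, making this again a bijection; similarly for good left diagonals with form $(\frac n2-1)k + (\text{shift})$. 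So the whole argument hinges on $\frac n2\pm1$ being units modulo $n$, which is exactly equivalent to $n\equiv0\pmod4$, and that is the linchpin I would highlight.
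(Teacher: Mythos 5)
Your overall mechanism is the paper's: take the Margossian square $M$ of order $n$ from Lemma \ref{margossian}, subtract $n^2+1$ from a suitable set of its entries so that every row, column and good diagonal becomes zero-sum, and invoke Proposition \ref{diagonals}. But the normalization you finally commit to — negate exactly the ``small'' entries, those $\le\frac{n^2}{2}$ — breaks the one property that makes the object a Heffter space. Since conjugate entries sum to $n^2+1$, subtracting $n^2+1$ from the small member $f$ of a conjugate pair yields $-\overline f$, the negative of the \emph{large} member; hence your new entry set is $\{x : x\in[\tfrac{n^2}{2}+1,n^2]\}\cup\{-x : x\in[\tfrac{n^2}{2}+1,n^2]\}$. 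This set is symmetric under negation and contains no element of absolute value $\le\frac{n^2}{2}$, so it is \emph{not} a half-set of $[-n^2,n^2]$ (a half-set must contain exactly one of $x,-x$ for every $x\in[1,n^2]$). What you obtain is a $\Z$-additive $(n^2,n;3)$ space with zero-sum blocks, but not a Heffter space, and the ``shiftability is automatic'' remark cannot repair this. The condition actually needed on the set $F$ of entries to be negated is that $F$ be closed under conjugation: then the entry set becomes $(E\setminus F)\cup(-F)$, a genuine half-set. The set of small entries is as far from this as possible, since it contains exactly one element of each conjugate pair. This is precisely where the paper's proof does its real work: $F$ is chosen \emph{positionally}, as a pattern of cells invariant under the shift $(i,j)\mapsto(i+\frac n2,j+\frac n2)$ and meeting every row, every column and every (good) diagonal in exactly $\frac n2$ cells, with two different patterns according to whether $\frac n4$ is even or odd. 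Your value-based choice satisfies the counting condition (indeed, by your own $\gcd(\frac n2\pm1,n)=1$ computation, for \emph{all} diagonals, not only the good ones) but fails closure under conjugation; both conditions are needed, and the second is the heart of the matter.

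Two smaller points. Your column count is argued incorrectly: with $j$ fixed, the form $\frac n2 i+j$ is not a bijection of $\Z_n$ (as $\frac n2$ is a zero divisor); it takes only the two values $j$ and $j+\frac n2$, each $\frac n2$ times, and the count $\frac n2$ survives only because exactly one of these two values is ``small''. Also, any value-determined sign rule would have to be closed under $x\mapsto n^2+1-x$ \emph{and} meet each row, column and good diagonal in exactly $\frac n2$ entries; the small/large split achieves the latter but not the former, which is why the paper prescribes the signs by cell positions instead.
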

\begin{proof}
Let $M=(m_{i,j})$ be the Margossian magic square of order $n$ so that the set $E$ of its entries is the interval $[1,n^2]$.
Let us distinguish two cases according to whether $\frac{n}{4}$ is even or odd.

\smallskip\noindent
\underline{Case 1}:\quad $\frac{n}{4}$ is even.

\smallskip
Let $F$ be the $\frac{n^2}{2}$-subset of $E$ consisting of all entries $m_{i,j}$ with either 
$i$ odd and $j\in [\frac{n}{4}+1,\frac{n}{2}] \ \cup \ [\frac{3n}{4}+1,n]$ or $i$ even and 
$j\in [1,\frac{n}{4}] \ \cup \ [\frac{n}{2}+1,\frac{3n}{4}]$.
For instance, if $n=8$, then $F$ consists of the ``black" entries of $M$ shown below
\scriptsize$$\begin{pmatrix}
\circ & \circ & \bullet & \bullet & \circ & \circ & \bullet & \bullet\cr 
\bullet & \bullet & \circ & \circ & \bullet & \bullet & \circ & \circ\cr
\circ & \circ & \bullet & \bullet & \circ & \circ & \bullet & \bullet\cr 
\bullet & \bullet & \circ & \circ & \bullet & \bullet & \circ & \circ\cr
\circ & \circ & \bullet & \bullet & \circ & \circ & \bullet & \bullet\cr 
\bullet & \bullet & \circ & \circ & \bullet & \bullet & \circ & \circ\cr
\circ & \circ & \bullet & \bullet & \circ & \circ & \bullet & \bullet\cr 
\bullet & \bullet & \circ & \circ & \bullet & \bullet & \circ & \circ\cr
\end{pmatrix}$$
\normalsize
It is straightforward to check that the following conditions hold:
\begin{itemize}
\item[($i$)] the conjugate of any $f\in F$ is still in $F$;\label{F1}
\item[($ii$)] each row, each column and each diagonal of $M$ has exactly $\frac{n}{2}$ elements in $F$.
\end{itemize}
Let $M'$ be the matrix obtainable from $M$ by subtracting $n^2+1$ from each of its entries belonging to $F$:
$$m'_{i,j}=\begin{cases}m_{i,j}-(n^2+1) & {\rm if} \ m_{i,j}\in F; \medskip\cr m_{i,j} & {\rm otherwise}.\end{cases}$$

By property $(i)$ the set $F$ can be partitioned into pairs $\{f,\overline f\}$ where $\overline f$ is the conjugate of $f$.
Clearly, the set $E'$ of entries of $M'$ is obtainable from $E$ by replacing each $f\in F$ with $f-(n^2+1)$.
On the other hand we have $f-(n^2+1)=-\overline{f}$ and $\overline{f}-(n^2+1)=-f$ by definition of conjugates elements,
so that $E'=(E\setminus F) \ \cup \ (-F)$.  In other words, the set $E'$ is obtainable from the set $E=[1,n^2]$ by flipping the sign of its elements belonging to $F$.
It is then obvious that $E'$ is a half-set of $[-n^2,n^2]$. 

Let $S'=\{m'_{i_1,j_1},\dots,m'_{i_n,j_n}\}$ be either a row or a column or a diagonal of $M'$ and let 
$S=\{m_{i_1,j_1},\dots,m_{i_n,j_n}\}$ be the correspondent row or column or diagonal of $M$.
By definition of $M'$, we have 
\begin{equation}\label{S'}
\sum_{k=1}^nm'_{i_k,j_k}=\sum_{k=1}^nm_{i_k,j_k}-|S\cap F|\cdot(n^2+1).
\end{equation} 
Also, we have $\sum_{k=1}^nm_{i_k,j_k}=\frac{n(n^2+1)}{2}$ since $M$ is pandiagonal
and $|S\cap F|=\frac{n}{2}$ in view of $(ii)$. We conclude that the sum in (\ref{S'}) is null, i.e., $S'$ is zero-sum.

We conclude that the sets $R$, $C$, $D^r$ and $D^\ell$ of the rows, the columns, the right diagonals and the left diagonals
of $M'$ are partitions of $E'$ into zero-sum sets of size $n$.
Then, by Proposition \ref{diagonals}, $\{R, C, D^r\}$ is the resolution of a shiftable $(n^2,n;3)$ Heffter space.

\smallskip\noindent
\underline{Case 2}:\quad $\frac{n}{4}$ is odd.

\smallskip
Let $F$ be the $\frac{n^2}{4}$-subset of $E$ consisting of all entries $m_{i,j}$ with 
$$j\in\begin{cases}[1,\frac{n}{2}] & \mbox{for either $i\leq\frac{n}{2}$ even or $i>\frac{n}{2}$ odd} \medskip\cr [\frac{n}{2}+1,n] & {\rm otherwise}\end{cases}$$
For instance, if $n=4$, then $X$ consists of the ``black" entries of $M$ shown below
\scriptsize$$\begin{pmatrix}
\circ & \circ & \bullet & \bullet\cr 
\bullet & \bullet & \circ & \circ\cr
\bullet & \bullet & \circ & \circ\cr
\circ & \circ & \bullet & \bullet\cr
\end{pmatrix}$$
\normalsize
It is straightforward to check that condition ($i$) still holds whereas condition ($ii$) should be replaced by
\begin{itemize}
\item[($ii)^\prime$] each row, each column and each good diagonal of $M$ has exactly $\frac{n}{2}$ elements in $F$.
\end{itemize}
Reasoning exactly as in Case 1, one can see the matrix obtainable from $M$ by subtracting $n^2+1$ from all the entries belonging to $F$
is a half-pandiagonal H$(n,n)$.
\end{proof}

\begin{ex}
Starting from the M$(4)$ constructed in Example \ref{ex:M4} and following the proof of Theorem \ref{thm:Magic} we get the array $M'$ below.
The three partitions of the set of entries of $M'$ into rows, columns and good diagonals of $M'$ form the resolution of a shiftable $(16,4;3)$ Heffter space.
$$M^\prime(4)=\begin{array}{|r|r|r|r|r|r|r|r|} \hline
11 & 2 & -10 & -3\\ \hline
-12 & -1 & 9 & 4\\ \hline
-7 & -14 & 6 & 15\\ \hline
8 & 13 & -5 & -16\\ \hline
\end{array}$$
\end{ex}

\begin{ex}
Take the $\M(8)$ constructed in Example \ref{ex:M8}. Following the proof of Theorem \ref{thm:Magic} we get the array $M'$ below.
The three partitions of the set of entries of $M'$ into rows, columns and right diagonals of $M'$ form the resolution of a shiftable $(64,8;3)$ Heffter space.
$$M'=\begin{array}{|r|r|r|r|r|r|r|r|} \hline
55 & 42 & -26 & -63 & 15 & 18 & -34 & -7\\ \hline
-51 & -46 & 30 & 59 & -11 & -22 & 38 & 3\\ \hline
53 & 44 & -28 & -61 & 13 & 20 & -36 & -5\\ \hline
-56 & -41 & 25 & 64 & -16 & -17 & 33 & 8\\ \hline
50 & 47 & -31 & -58 & 10 & 23 & -39 & -2\\ \hline
-54 & -43 & 27 & 62 & -14 & -19 & 35 & 6\\ \hline
52 & 45 & -29 & -60 & 12 & 21 & -37 & -4\\ \hline
-49 & -48 & 32 & 57 & -9 & -24 & 40 & 1\\ \hline
\end{array}$$
\end{ex}

\begin{rem} 
The arrays constructed in Theorem \ref{thm:Magic} are, in particular, square Heffter arrays $H(n;n)$.
Even if their existence is already known \cite{ABD}, this construction is new
and, as far as we are aware, this is the first time that a connection between magic squares and Heffter arrays
is highlighted.
\end{rem}

It is an easy exercise to prove that there exists a $(mn,n;3)$ space for every pair of integers $(m,n)$ with $m\geq n\geq 1$.
Thus, combining Theorem \ref{recursive} with Theorem \ref{margossian} we can state the following.

\begin{thm}
There exists a shiftable $(16\ell^2mn,4\ell n;3)$ Heffter space for every triple of positive integers $(\ell,m,n)$ with $m\geq n$.
\end{thm}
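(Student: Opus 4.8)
The plan is to combine the two main results of the paper, namely Theorem~\ref{thm:Magic} (the direct construction via Margossian magic squares) and Theorem~\ref{recursive} (the recursive construction), after first settling the auxiliary claim that a $(mn,n;3)$ space exists whenever $m\geq n\geq 1$.

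First I would dispose of the auxiliary claim. A $(mn,n;3)$ space is, by the characterization recalled in the introduction, a set of $3$ mutually orthogonal partitions of an $mn$-set into blocks of size $n$. For $m=n$ such a structure is exactly a $(3,n)$-net, and it is classical that a pair of mutually orthogonal Latin squares of order $n$ exists for every $n\neq 2,6$ (and the small exceptional orders are not relevant here since $n=4\ell\geq 4$ with $\ell\geq 1$, and in fact one needs $n$ a multiple of $4$); a $(3,n)$-net is equivalent to such a pair together with the row and column partitions. For $m>n$ one takes $m$ disjoint copies of a $(3,n)$-net (equivalently, one inflates: work on $[0,m-1]\times[0,n-1]$, using the "rows" partition, the "columns-within-each-copy" partition refined appropriately, and a third partition built from an MOLS pair on each copy), so a $(mn,n;3)$ space on $mn$ points is obtained. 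Since in our application $n$ will be replaced by $4\ell n$, a multiple of $4$, the orders involved are never the troublesome $2$ or $6$, so no exceptional cases intervene.

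Next, the main assembly. By Theorem~\ref{thm:Magic} there exists a shiftable $(16\ell^2,4\ell;3)$ Heffter space for every $\ell\geq 1$ (apply it with $n=4\ell$, noting $4\ell\equiv 0\pmod 4$). By the auxiliary claim there exists a $(mn,4\ell n;3)$ space for every pair of positive integers $m,n$ with $m\geq n$: indeed its point set has size $mn\cdot(4\ell n)/(4\ell n)$—more carefully, we want a $(w,k;3)$ space with $k=4\ell n$ and $w$ divisible by $k$ so that $w/k = mn/(4\ell n)$ works out; the cleanest choice is $w = 4\ell mn^2$ so that $w=(mn)\cdot(4\ell n)$ and $w/k=mn$, giving a $(4\ell mn^2, 4\ell n; 3)$ space, which exists by the auxiliary claim with the roles $(m',n')=(mn,4\ell n)$ and $m'\geq n'$ precisely when $mn\geq 4\ell n$—but this need not hold. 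So the correct bookkeeping is: one wants a $(w,n;3)$ space where the \emph{second} parameter of the recursion, call it $n_{\mathrm{rec}}$, multiplies $4\ell$ to give block size $4\ell n_{\mathrm{rec}}=4\ell n$, hence $n_{\mathrm{rec}}=n$, and $w=mn$ for some $m\geq n$. Plugging $v=16\ell^2$, $k=4\ell$, $w=mn$, $n_{\mathrm{rec}}=n$, $r=3$ into Theorem~\ref{recursive} yields a shiftable $(16\ell^2\cdot mn,\ 4\ell\cdot n;3)=(16\ell^2 mn,\ 4\ell n;3)$ Heffter space. This is exactly the claimed conclusion.

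The only real obstacle is the bookkeeping of parameters: one must choose the ingredients for Theorem~\ref{recursive} so that the shiftable space supplies $(v,k;r)=(16\ell^2,4\ell,3)$ and the ordinary space supplies $(w,n_{\mathrm{rec}},r)=(mn,n,3)$, and then verify the hypotheses—$16\ell^2$ is shiftable by Theorem~\ref{thm:Magic}, a $(mn,n;3)$ space exists since $mn\geq n$ and $n$ is a multiple of $4$ (so no MOLS exception), and $r=3$ matches. The shiftability of the product is automatic from Theorem~\ref{recursive}, which produces a shiftable space whenever it is fed a shiftable one. Everything else is the routine multiplication $v w = 16\ell^2 mn$ and $k n_{\mathrm{rec}} = 4\ell n$, so no genuinely hard step remains once the auxiliary existence of $(mn,n;3)$ spaces is recorded.
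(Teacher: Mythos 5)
Your overall assembly is exactly the paper's: the authors obtain the theorem by feeding the shiftable $(16\ell^2,4\ell;3)$ Heffter space of Theorem~\ref{thm:Magic} and a $(mn,n;3)$ space (whose existence for $m\geq n\geq 1$ they record, without proof, as an easy exercise) into Theorem~\ref{recursive}, and your final bookkeeping $v=16\ell^2$, $k=4\ell$, $w=mn$, ingredient block size $n$, $r=3$ is the correct one, so the strategy matches.

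The one place where you supply an argument the paper omits, namely the existence of a $(mn,n;3)$ space for all $m\geq n\geq 1$, is where your write-up does not hold up, in three concrete respects. First, ``$m$ disjoint copies of a $(3,n)$-net'' has $mn^2$ points, not $mn$, so it is not a $(mn,n;3)$ space, and the parenthetical ``inflation'' on $[0,m-1]\times[0,n-1]$ does not repair this: the columns of that grid have size $m$, not $n$, and refining them into $n$-subsets is not possible in general (nor is orthogonality addressed). Second, a $(3,n)$-net does not require a pair of MOLS; it is equivalent to a \emph{single} Latin square of order $n$ (rows, columns and symbol classes of its $n^2$ cells), so there are no exceptional orders $2$ and $6$ at all -- a pair of MOLS would give a $(4,n)$-net. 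Third, in the application you assert that the ingredient block size $n$ ``is a multiple of $4$'': it is not, since $n$ is a free parameter of the theorem (it is the resulting block size $4\ell n$ that is divisible by $4$, which is irrelevant to the ingredient), and the condition that actually matters is $m\geq n$ (not the vacuous $mn\geq n$), which is precisely the hypothesis of the theorem and is what makes a $(mn,n;3)$ space possible. The auxiliary fact itself is true and elementary -- for instance, a $(mn,n;3)$ space is the same as an $m\times m$ array with exactly $n$ filled cells in each row and each column, filled with $m$ symbols each occurring $n$ times and never repeated in a row or column; for $m=n$ take a Latin square of order $n$, and for $m>n$ one can take, say, $n$ broken diagonals of the Cayley table of $\Z_m$ when $m$ is odd, with a mild adjustment for even $m$ -- but as written your justification of this step would fail, so it needs to be redone before your proof stands on its own.
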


\section*{Acknowledgements}
The authors are partially supported by INdAM - GNSAGA.


\begin{thebibliography}{99}


\bibitem{A} D.S. Archdeacon, 
{\it Heffter arrays and biembedding graphs on surfaces},
Electron. J. Combin. \textbf{22}, \#P1.74 (2015).

\bibitem{ABD} D.S. Archdeacon, T. Boothby, J.H. Dinitz, 
{\it Tight Heffter arrays exist for all possible values},
J. Combin. Des. \textbf{25} (2017), 5--35.

\bibitem{ADDY} D.S. Archdeacon, J.H. Dinitz, D.M. Donovan, E.\c{S}. Yaz\i c\i,
{\it Square integer Heffter arrays with empty cells},
Des. Codes Cryptogr. \textbf{77} (2015), 409--426.

\bibitem{BC} W.W.R. Ball, H.S.M. Coxeter, Mathematical Recreations and Essays,
University of Toronto Press, Toronto, 12th Edition (1974).

\bibitem{BJL} T. Beth, D. Jungnickel, H. Lenz,
Design Theory. Cambridge University Press, Cambridge, 1999.

\bibitem{BGL}
D. Bryant, H. Gavlas, A. Ling, {\it Skolem-type difference sets for cycles}, 
Electronic J. Combin. {\bf 10} (2003), $\sharp$R38.

 \bibitem{B} M. Buratti, {\it Tight Heffter arrays from finite fields}, Fields Institute Communications {\bf86} (2024), 25--36.

\bibitem{BurDel}
M. Buratti, A. Del Fra, 
{\it Existence of cyclic k-cycle systems of the complete graph}, Discrete Math. {\bf261} (2003), 113--125.


\bibitem{BMN} M. Buratti, F. Merola, A. Nakic,
{\it Additive combinatorial designs}, preprint.

\bibitem{BN1}
M. Buratti, A. Nakic,
{\it Super-regular Steiner $2$-designs}, Finite Fields Appl. {\bf85} (2023), Article number 102116.

\bibitem{BN2} M. Buratti, A. Nakic, 
{\it Additivity of symmetric and subspace $2$-designs}, Des. Codes Cryptogr. {\bf92} (2024), 3561--3572.

\bibitem{BP1} M. Buratti, A. Pasotti, {\it Heffter spaces}, Finite Fields Appl. {\bf98} (2024),  Article number 102464.

\bibitem{BP2} M. Buratti, A. Pasotti, {\it More Heffter spaces via finite fields}, preprint arXiv:2408.12412


\bibitem{BS22} M. Buratti, D.R. Stinson, 
{\it On resolvable Golomb rulers, symmetric configurations and
progressive dinner parties},
J. Algebraic Combin. \textbf{55} (2022), 141-156.

\bibitem{BCP} A.C. Burgess, N.J. Cavenagh, D.A. Pike,
{\it Mutually orthogonal cycle systems}, 
Ars Math. Contemp. \textbf{23} (2023), \#P2.05.

\bibitem{BDCY}
K. Burrage, D.M. Donovan, N.J. Cavenagh, E.S. Yazici, 
{\it Globally simple Heffter arrays $H(n; k)$ when $k \equiv 0, 3$ $($mod $4)$},
Discrete Math. {\bf343} (2020), Article number 111787.

\bibitem{CFP}
A. Caggegi, G. Falcone, M. Pavone,
{\it On the additivity of block designs}, J. Algebr. Comb. {\bf 45} (2017), 271--294.

\bibitem{CFP2}
A. Caggegi, G. Falcone, M. Pavone, 
{\it Additivity of affine designs}, J.  
Algebr. Comb. {\bf53} (2021), 755--770.


\bibitem{CDDY} N.J. Cavenagh, J.H. Dinitz, D. Donovan,  E.\c{S} Yaz\i c\i,
{\it The existence of square non-integer Heffter arrays},
Ars Math. Contemp. \textbf{17}, 369--395 (2019).


 \bibitem{CMPP} S. Costa, F. Morini, A. Pasotti, M.A. Pellegrini,
 {\it Globally simple Heffter arrays and orthogonal cyclic cycle decompositions},
 Austral. J. Combin. \textbf{72} (2018),  549--593.


\bibitem{CD} C.J. Colbourn, J.H. Dinitz, 
Handbook of Combinatorial Designs.
Second Edition, Chapman \& Hall/CRC, Boca Raton, FL, 2006.

\bibitem{DW} J.H. Dinitz,  I.M. Wanless,
{\it The existence of square integer Heffter arrays},
Ars Math. Contemp. \textbf{13} (2017), 81--93.

\bibitem{FP}
G. Falcone, M. Pavone,
{\it Binary Hamming codes and Boolean designs},
Des. Codes Cryptogr. {\bf89} (2021), 1261--1277.


\bibitem{G} G. Gévay,
{\it Resolvable configurations},
Discr. Appl. Math. \textbf{266}, 319--330 (2019).

\bibitem{KY} S. Kukukcifci, E.\c{S}. Yaz\i c\i,  
{\it Orthogonal cycle systems with cycle length less than 10},
J. Combin. Des. \textbf{32} (2024), 31--45. 

\bibitem{MT}
L. Mella, T. Traetta, {\it Constructing generalized Heffter arrays via near alternating sign matrices}, 
J. Combin. Theory A {\bf205} (2024), Article number 105873.


\bibitem{PD}  A. Pasotti, J.H. Dinitz,
A survey of Heffter arrays.
Fields Institute Communications {\bf86} (2024), 353--392.

\bibitem{Pavone}
M. Pavone, {\it A quasidouble of the affine plane of order $4$ and the solution of a problem on additive designs},
Finite Fields Appl. {\bf92} (2023), Article number 102277.



\end{thebibliography}
\end{document}